   \newcommand{\cG}{\mathcal{G}}
\newtheorem{proposition}{{\bf Proposition}}
\newtheorem{theorem}{{\bf Theorem}}
\newtheorem{corollary}{{\bf Corollary}}
\begin{document}

\author{David Mart\'inez Torres}

\address{PUC-Rio de Janeiro\\
Departamento de Matem\'atica \\ Rua Marquês de S\~ao Vicente, 225\\
G\'avea - 22451-900, Rio de Janeiro, Brazil }

\email{dfmtorres@gmail.com}

\title{The diffeomorphism type of canonical integrations of Poisson tensors on surfaces}
\begin{abstract} A surface $\Sigma$ endowed with a Poisson tensor $\pi$ is known to  admit
a \emph{canonical integration} $\cG(\pi)$,
which is a 4-dimensional manifold  with a (symplectic) groupoid structure. 
In this short note we show that when $\pi$ is not an area form on the 2-sphere, then 
$\cG(\pi)$ is diffeomorphic to the cotangent bundle $T^*\Sigma$, this extending results in \cite{Ma09} and \cite{BCST12}.
\end{abstract}
\maketitle

\section{Introduction} 

A  Poisson structure on a manifold $M$ is given by a bivector $\pi$ closed under the Schouten Bracket. Equivalently, 
it is defined by a bracket  
$\{\cdot, \cdot\}$ on smooth functions such that $(C^\infty(M),\{\cdot,\cdot\})$ becomes  a Lie algebra over the reals 
and $X_f:=\{f,\cdot\}$ is a derivation for every function $f$; the Hamiltonian vector fields $X_f$, $f\in C^\infty(M)$,
define a (possibly singular)
integrable distribution, its  maximal integral submanifolds carrying a symplectic form.

Poisson structures can have rather complicated  symplectic foliations. Under some well-known conditions \cite{CF02}
a Poisson structure $(M,\pi)$ 
can be `symplectically desingularized': this means that there exist a symplectic manifold $(S,\omega)$ and surjective submersion
$\phi\colon S\rightarrow M$, such that for any $f\in C^\infty(M)$ the Hamiltonian vector fields $X_f$ and $X_{\phi^*f}$ are $\phi$-related 
 and $X_{\phi^*f}$ is complete whenever $X_f$ is complete. 
 
Poisson manifolds
which can be `symplectically desingularized' are referred to as \emph{integrable}. The reason
is that if there exist symplectic desingularizations for $(M,\pi)$, then there is a canonical one
${\bf s}\colon (\cG(\pi),\omega_\cG)\rightarrow (M,\pi)$ \cite{CF02} called the \emph{canonical integration}, which is 
the unique (symplectic) groupoid  \footnote{Suffice it to say here that the groupoid structure includes 
the source and target maps ${\bf s,t}\colon\cG(\pi)\rightarrow M$, a bisection of units $u\colon M\rightarrow \cG(\pi)$ and a partial 
associate composition law in which `arrows' $g,h\in \cG(\pi)$ can be composed whenever ${\bf s}(g)={\bf t}(h)$; the symplectic
form $\omega_{\cG}$ is compatible with the groupoid structure (it is \emph{multiplicative}).}\cite{We}
over $M$ with 1-connected ${\bf s}$-fibers which integrates 
the Lie algebroid
structure defined by $\pi$ on $T^*M$. In this circumstances one says that the Poisson structure $\pi$ is \emph{integrable}.

There are several good reasons to study the symplectic geometry of canonical integrations of integrable Poisson structures.
One of them is trying to understand the role of multiplicative symplectic structures among symplectic structures. Another one 
is that Poisson structures describe classical physical systems which one would like to quantize. 
If the Poisson structure is integrable,
then it is natural consider the geometric quantization of the canonical symplectic integration (taking into account as well the groupoid
 structure) \cite{Ha08,BCST12}.

For an integrable Poisson structure $(M,\pi)$, the standard construction of its canonical integration $(\cG(\pi),\omega_\cG)$ 
is by symplectic reduction on an infinite dimensional symplectic manifold \cite{CaFe,CF02}.
Since $\cG(\pi)$ is the leaf space of a foliation (of finite codimension), it is extremely difficult to describe $\cG(\pi)$ as a manifold, let
alone its symplectic geometry; even more, difficulties arise already at level of general topology, since as it is often the case for
leaf spaces, $\cG(\pi)$ need not be Hausdorff. Still, it makes sense to try to describe $(\cG(\pi),\omega_\cG)$  in the lowest 
possible dimension, i.e. when $M$ is a surface.

Let us start by recalling that any bivector $\pi$ on a surface $\Sigma$ defines a Poisson structure, and that all
Poisson structures on surfaces are integrable \cite{CaFe}. 

Regarding general topology issues, the canonical integration of a Poisson structure on a surface is always Hausdorff (see \cite{CaFe}
for the proof
for Poisson structures on $\mathbb{R}^2$ with 1-connected symplectic leaves, and \cite{Ma09} for the case of arbitrary Poisson structures
and surfaces).

The canonical integration  $\cG(\pi)$ is a 4-dimensional manifold whose diffeomorphism type is known in few cases: For the trivial Poisson structure and for area forms it
is well-known that 
$\cG(\pi)$ is diffeomorphic  to the cotangent bundle. When $\Sigma=S^2$ and $\pi$ is a Poisson homogeneous 
structure (the corresponding
foliation having one symplectic leaf and quadratic
singularity at the north pole say),  the canonical integration is also diffeomorphic to the cotangent bundle. Finally, 
 $\cG(\pi)$ is diffeomorphic to $\mathbb{R}^4$ for any Poisson structure on $\mathbb{R}^2$ \cite{CaFe,Ma09}.

Let us finish this brief survey by recalling that the symplectomorphism type of $(\cG(\pi),\omega_\cG)$ is only known in the 
two extreme cases: for both the trivial Poisson structure on an arbitrary surface 
and the Poisson structure associated to an area form on $\Sigma$  compact, with empty boundary and different from the 2-sphere \footnote{For $S^2$ 
endowed with an area form $\omega$ the canonical integration is symplectomorphic to $(S^2\times S^2,\omega\oplus -\omega)$}, the canonical
integration is symplectomorphic to the cotangent bundle
with its standard symplectic form (see \cite{MD} for the proof in the case of area forms).

The purpose of this short note is to fully describe the diffeomorphism type of canonical integrations
of Poisson structures on surfaces.

\begin{theorem}\label{thm:main} Let $\pi$ be a Poisson tensor on a surface $\Sigma$ which is not an area form on $S^2$.
 Then there exists a diffeomorphism of fibrations  
 \[
\xymatrix{\cG(\pi) \ar[rr]\ar[rd]_{\bf s} & & T^*\Sigma \ar[ld]^{\pi} \\
& \Sigma  } 
\]

 taking the units of the groupoid to the zero section of the cotangent bundle.
 
\end{theorem}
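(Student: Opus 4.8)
The plan is to split according to whether the vanishing locus $Z=\{\pi=0\}$ is empty. If $Z=\emptyset$ then $\pi$ is an area form, and since by hypothesis $\Sigma\neq S^2$, the canonical integration is the fundamental groupoid $\Pi_1(\Sigma)$ of the resulting symplectic surface. As $\Sigma$ is then a surface whose universal cover is $\mathbb{R}^2$, the identification $\Pi_1(\Sigma)\cong T^*\Sigma$ as fibrations is classical (indeed the stronger symplectomorphism statement is recorded in \cite{MD}). So from now on I would assume $Z\neq\emptyset$, which is the genuinely new case.

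The first substantial step is to compute the source fibres. Since $\mathbf{s}\colon\cG(\pi)\to\Sigma$ is a surjective submersion and $\dim\cG(\pi)=4$, every source fibre is a $1$-connected $2$-manifold. Over a regular point $x$ the fibre is the universal cover of the $2$-dimensional leaf $L_x$; because $Z\neq\emptyset$ no leaf can be closed, so $L_x$ is an open surface and its universal cover is diffeomorphic to $\mathbb{R}^2$. Over a point $x\in Z$ the leaf is $\{x\}$ and the fibre is the $1$-connected Lie group integrating the isotropy Lie algebra, namely the $2$-dimensional algebra $(T_x^*\Sigma,[\,\cdot,\cdot\,]_x)$ obtained from the linearisation of $\pi$; whether this algebra is abelian or isomorphic to $\mathfrak{aff}(1)$, its $1$-connected integration is diffeomorphic to $\mathbb{R}^2$. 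Hence \emph{every} source fibre of $\cG(\pi)$ is diffeomorphic to $\mathbb{R}^2$.

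The second step is to promote this fibrewise statement to the assertion that $\mathbf{s}$ is a locally trivial fibre bundle with fibre $\mathbb{R}^2$. Over the regular part, where $\pi$ is locally symplectic, local triviality is immediate, since there $\cG(\pi)$ restricts to the fundamental groupoid of a leaf and its source map is manifestly a bundle. The work is concentrated near $Z$: I would use the local normal forms for Poisson tensors on surfaces together with the explicit local description of $\cG(\pi)$ around singular points furnished in \cite{Ma09,CaFe} to produce, on a disc neighbourhood $D$ of each point of $Z$, a trivialisation $\mathbf{s}^{-1}(D)\cong D\times\mathbb{R}^2$ matching the units with $D\times\{0\}$. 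The delicate point, and what I expect to be the main obstacle, is precisely this transition across $Z$: one must verify that the source fibres vary smoothly and without topological jump as a point crosses from the regular part (where the fibre is the universal cover of a leaf) into $Z$ (where it is an isotropy group), and that the resulting family is genuinely locally trivial rather than merely fibrewise constant.

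Granting local triviality, the conclusion is a matter of low-dimensional topology. A fibre bundle over $\Sigma$ with fibre $\mathbb{R}^2$ and a distinguished section (the units) has structure group reducible to $\mathrm{Diff}(\mathbb{R}^2,0)$, which deformation retracts onto $GL(2,\mathbb{R})$ by linearising at the fixed origin; hence $\cG(\pi)$ is fibrewise diffeomorphic to the total space of a rank-$2$ vector bundle $E\to\Sigma$ carrying the units as its zero section. Finally I would identify $E$: the normal bundle of the unit manifold inside any Lie groupoid is canonically the Lie algebroid, here $A=T^*\Sigma$, while the normal bundle of the zero section of a vector bundle is the bundle itself, so $E\cong T^*\Sigma$. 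Arranging the structure-group reduction to preserve the section then yields a diffeomorphism of fibrations $\cG(\pi)\cong T^*\Sigma$ sending the units to the zero section, as required.
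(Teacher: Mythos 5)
Your skeleton (compute the $\mathbf{s}$-fibers, upgrade to a locally trivial bundle, reduce the structure group of the resulting $\mathbb{R}^2$-bundle-with-section to $\mathrm{Gl}(2,\mathbb{R})$, identify the resulting vector bundle with $T^*\Sigma$) coincides with the paper's, and your first step is sound: it is essentially the fiber computation that the paper imports from \cite{Ma09}. However, the step you yourself flag as ``the main obstacle'' --- local triviality of $\mathbf{s}$ across the singular locus $Z$ --- is a genuine gap, and the route you propose for closing it would fail. You plan to use ``local normal forms for Poisson tensors on surfaces'' near points of $Z$ together with an explicit local description of $\cG(\pi)$ from \cite{Ma09,CaFe}; neither exists. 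Writing $\pi=f\,\partial_x\wedge\partial_y$ in local coordinates, the locus $Z$ is locally the zero set of an \emph{arbitrary} smooth function, hence can be any closed subset of $\Sigma$; away from generic (say linear) zeros there is no normal form for $\pi$, and neither reference supplies a local model of the groupoid near such points. The paper sidesteps all local analysis with a soft topological input, Meigniez's theorem (\cite{Me03}, Corollary 31): a surjective submersion between \emph{Hausdorff} manifolds all of whose fibers are diffeomorphic to $\mathbb{R}^n$ is automatically a locally trivial fibration. Granting Hausdorffness of $\cG(\pi)$ --- the main theorem of \cite{Ma09}, which your argument also needs but never invokes --- your fiberwise computation then yields local triviality for free. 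This theorem (packaged in the paper as Corollary 3 of \cite{Ma09} plus its Proposition on submersions) is the missing idea; without it the proof does not close, since having all fibers mutually diffeomorphic and noncompact is not, by itself, a formal reason for a submersion to be a bundle.

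Two remarks on the steps you do carry out. First, the reduction from $\mathrm{Diff}(\mathbb{R}^2,0)$ to $\mathrm{Gl}(2,\mathbb{R})$ is not a formal consequence of the existence of a deformation retraction: the structure group is infinite dimensional, so the obstruction-theoretic equivalence between reductions and sections of associated bundles does not apply off the shelf. The paper repairs this by checking that the linearizing retraction $(t,\phi)\mapsto\lambda_{1/t}\circ\phi\circ\lambda_t$ is smooth in a parametrized sense (it takes smooth families of diffeomorphisms to smooth families), which is what legitimizes the reduction; your write-up glosses over this point. Second, your identification of the resulting rank-$2$ bundle with $T^*\Sigma$ --- via the fact that the normal bundle of the unit section of any Lie groupoid is canonically its Lie algebroid, here $T^*\Sigma$ --- is correct and genuinely different from the paper's argument, which instead uses that the units form a Lagrangian section of the multiplicative symplectic form \cite{We} together with the linear-algebra fact that the normal bundle of a Lagrangian submanifold is isomorphic to its cotangent bundle \cite{MS}. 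Your variant is arguably more elementary, since it uses no symplectic geometry at all; had you secured local triviality, it would have been a perfectly good way to finish.
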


\section{Proof of the Theorem}

The starting point is a slight refinement of a result of Meigniez \cite{Me03}, in which we characterize those surjective
submersions which are the total space of a vector bundle.

\begin{proposition}\label{pro:pro}
 Let $p\colon Q\rightarrow B$ be a surjective submersion between (Hausdorff) manifolds satisfying the following properties:
 \begin{enumerate}
  \item For all $b\in B$ the fiber  $p^{-1}(b)$ is diffeomorphic to $\mathbb{R}^n$;
  \item the submersion has a section $\sigma\colon B\rightarrow Q$.
 \end{enumerate}
 
 Then there exists a rank $n$ vector bundle $\pi\colon E\rightarrow B$ and a diffeomorphism of fibrations 
 \[
\xymatrix{Q\ar[rr]^\Psi\ar[rd]_{p} & & E \ar[ld]^{\pi}\\
& B  } 
\]
taking $\sigma(B)$ to the zero section of $E$

\end{proposition}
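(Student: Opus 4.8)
The plan is to proceed in two stages: first establish that $p$ is a locally trivial fibre bundle with fibre $\mathbb{R}^n$, and then upgrade this fibre bundle to a vector bundle by reducing the structure group from $\mathrm{Diff}(\mathbb{R}^n)$ to $GL(n,\mathbb{R})$, keeping track throughout of the section $\sigma$ so that it lands on the zero section at the end.

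For the first stage I would invoke Meigniez's theorem from \cite{Me03}. The essential point is that a submersion all of whose fibres are diffeomorphic to $\mathbb{R}^n$ is \emph{not} automatically a locally trivial fibration, so hypothesis (1) is far from formal; one must verify that the hypotheses of \cite{Me03} are met in our situation, where the existence of the section $\sigma$ and the fact that the fibres are open Euclidean spaces are what make the argument run. The output of this stage is an atlas of local trivialisations $p^{-1}(U_\alpha)\cong U_\alpha\times\mathbb{R}^n$ with smooth transition functions $g_{\alpha\beta}\colon U_\alpha\cap U_\beta\to\mathrm{Diff}(\mathbb{R}^n)$. Using the section I would then normalise these trivialisations: in each chart $\sigma$ is recorded by a smooth map $s_\alpha\colon U_\alpha\to\mathbb{R}^n$, and post-composing with the fibrewise translation $v\mapsto v-s_\alpha(b)$ moves $\sigma(B)$ to the zero level. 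After this normalisation the transition functions must carry the zero-level section to itself, hence preserve the origin, i.e. take values in the subgroup $\mathrm{Diff}(\mathbb{R}^n,0)$.

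The second stage is the heart of the refinement, and I would carry it out by an explicit fibrewise scaling rather than by an abstract classifying-space argument. Given the cocycle $g_{\alpha\beta}\colon U_\alpha\cap U_\beta\to\mathrm{Diff}(\mathbb{R}^n,0)$, define for $t\in(0,1]$ the rescaled maps $g_{\alpha\beta}^t(x):=t^{-1}g_{\alpha\beta}(tx)$, and set $g_{\alpha\beta}^0:=D_0 g_{\alpha\beta}$, the fibrewise derivative at the origin. A direct computation shows that $\{g_{\alpha\beta}^t\}$ satisfies the cocycle identity for every fixed $t\in[0,1]$, since $g_{\alpha\beta}^t\circ g_{\beta\gamma}^t(x)=t^{-1}g_{\alpha\gamma}(tx)=g_{\alpha\gamma}^t(x)$; moreover $g_{\alpha\beta}^t\to D_0 g_{\alpha\beta}$ smoothly as $t\to 0$ by Taylor expansion at the origin, and the chain rule makes $\{D_0 g_{\alpha\beta}\}$ a genuine $GL(n,\mathbb{R})$-valued cocycle. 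Thus $\{g_{\alpha\beta}^t\}$ is a homotopy of $\mathrm{Diff}(\mathbb{R}^n,0)$-cocycles from the one defining $Q$ to one defining a rank $n$ vector bundle $\pi\colon E\to B$. Since homotopic cocycles over a paracompact manifold define isomorphic fibre bundles, the homotopy yields a fibre-preserving diffeomorphism $\Psi\colon Q\to E$; because every $g_{\alpha\beta}^t$ fixes the origin, the normalised section is carried to the zero section of $E$ throughout, giving $\Psi(\sigma(B))=0_E$.

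The main obstacle is the first stage: the passage from a submersion with $\mathbb{R}^n$-fibres to a genuine locally trivial bundle is exactly the nontrivial content of \cite{Me03}, and some care is needed to confirm that our hypotheses — in particular the section — place us within its scope. By contrast, the remaining points are routine: the smoothness and continuity of the scaling homotopy up to $t=0$, and the standard fact that homotopic cocycles yield isomorphic bundles in the $\mathrm{Diff}(\mathbb{R}^n,0)$ category.
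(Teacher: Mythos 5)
Your proposal is correct and takes essentially the same route as the paper: Meigniez's Corollary 31 to get local triviality, the section to normalize the transition functions into $\mathrm{Diff}(\mathbb{R}^n,0)$, and then the dilation trick $\phi\mapsto\lambda_{1/t}\circ\phi\circ\lambda_t$ (your $g^t_{\alpha\beta}(x)=t^{-1}g_{\alpha\beta}(tx)$), whose smoothness in families, up to and including $t=0$, is exactly the point the paper emphasizes. The only difference is packaging: the paper phrases the deformation as a ``smooth retraction'' of $\mathrm{Diff}(\mathbb{R}^n,0)$ onto $\mathrm{Gl}(n,\mathbb{R})$ feeding into obstruction theory for reduction of the structure group, whereas you implement it directly as a homotopy of cocycles and invoke homotopy invariance of bundle isomorphism classes.
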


\begin{proof}
 Because of property (i), Corollary 31 in \cite{Me03} implies that $p\colon Q\rightarrow B$ is a locally trivial fibration. 
 
 By property (ii) we can reduce the structural group of $p\colon Q\rightarrow B$ to the diffeomorphisms which fix the origin
 $\mathrm{Diff}(\mathbb{R}^n,0)$.
 
 The proof of the proposition amounts to showing that we can further reduce the structural group to the linear group 
 $\mathrm{Gl}(n,\mathbb{R})$.
 
 It is worth pointing out that if the structural group were a \emph{finite dimensional} Lie group $G$, then standard bundle 
 theory says that the existence of 
 reduction of the structural group to a subgroup $H$ is equivalent to the existence of a section of the associated bundle with fiber
 the homogeneous space $G/H$. If particular, if $H$ were a deformation retract of $G$, well-known arguments of obstruction theory 
 would imply
 that such a section always exists.
 
 In our case  the existence
 of a retraction of $\mathrm{Diff}(\mathbb{R}^n,0)$ into $\mathrm{Gl}(n,\mathbb{R})$ is not enough to reduce the structural group. The
 reason is that no matter which reasonable topology we use on $\mathrm{Diff}(\mathbb{R}^n,0)$, the evaluation map is not smooth.
 Specifically, we need to find a `smooth retraction' $H\colon [0,1]\times  \mathrm{Diff}(\mathbb{R}^n,0)\rightarrow \mathrm{Diff}(\mathbb{R}^n,0)$,
 in the sense that for any manifold $N$ and any smooth map
 $\Phi\colon N\times \mathbb{R}^n\rightarrow \mathbb{R}^n$ with $\Phi(n,\cdot)\in \mathrm{Diff}(\mathbb{R}^n,0)$, the composition
 $H(t,\Phi)\colon  [0,1]\times N\times \mathbb{R}^n\rightarrow \mathbb{R}^n$ be smooth (for the purposes of the application of 
 obstruction theory the manifold $N$ will always be a sphere). 
 
 Let $\lambda_t\colon \mathbb{R}^n\rightarrow \mathbb{R}^n$ be the dilation by factor $t\in \mathbb{R}>0$. 
 It is straightforward to check that the standard retraction taking a diffeomorphism fixing the origin to its linearization
 at the origin 
 \begin{eqnarray*}
    H\colon [0,1]\times \mathrm{Diff}(\mathbb{R}^n,0) &\longrightarrow & \mathrm{Diff}(\mathbb{R}^n,0)\\
         (t,\phi) &\longmapsto & \lambda_{1/t}\circ \phi\circ\lambda_t
 \end{eqnarray*}
is `smooth' in the above sense, this proving the proposition.

\end{proof}

When $P$ is a symplectic manifold carrying a Lagrangian section, then much more can be said.

\begin{corollary}\label{cor:cor}
 Let $p\colon Q\rightarrow B$ be as in Proposition \ref{pro:pro}. Assume further that $Q$ carries a symplectic structure
  so that the graph of the section $\sigma$ is a Lagrangian submanifold. Then 
 there exists a diffeomorphism of fibrations 
 \[
\xymatrix{Q\ar[rr]^\Psi\ar[rd]_{p} & & T^*B \ar[ld]^{\pi}\\
& B  } 
\]
taking $\sigma(B)$ to the zero section of the cotangent bundle.

\end{corollary}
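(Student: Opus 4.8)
The plan is to combine the diffeomorphism of fibrations supplied by Proposition \ref{pro:pro} with the linear algebra of Lagrangian complements, thereby pinning down the abstract vector bundle $E$ as $T^*B$. First I would apply Proposition \ref{pro:pro} verbatim: hypotheses (i) and (ii) are unchanged, so there is a rank $n$ vector bundle $\pi\colon E\rightarrow B$ and a fibre-preserving diffeomorphism $\Psi\colon Q\rightarrow E$ carrying $\sigma(B)$ to the zero section. It then suffices to exhibit a vector bundle isomorphism $E\cong T^*B$, since postcomposing $\Psi$ with it produces a diffeomorphism of fibrations sending $\sigma(B)$ to the zero section of $T^*B$, which is exactly the assertion of the corollary.

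The first key step is to identify $E$ with the normal bundle of $L:=\sigma(B)$ in $Q$. Since $\Psi$ is fibre-preserving, $d\Psi$ sends vertical vectors to vertical vectors, and along the zero section the vertical tangent bundle of a vector bundle is canonically the bundle itself; hence $d\Psi$ restricts to a bundle isomorphism $\ker dp|_{L}\cong E$. On the other hand, because $p$ is a submersion admitting the section $\sigma$, the vertical subspace $\ker dp_{\sigma(b)}$ is a complement to $d\sigma_b(T_bB)$ inside $T_{\sigma(b)}Q$, so it maps isomorphically onto the normal space $\nu_b:=T_{\sigma(b)}Q/d\sigma_b(T_bB)$. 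Identifying $B\cong L$ via $\sigma$, this yields $E\cong \nu(L)$ as vector bundles over $B$.

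Now I would bring in the symplectic structure. A dimension count shows that, $L$ being Lagrangian in the $2n$-dimensional $Q$, one has $\dim B=n$, compatibly with the ranks above. For the Lagrangian $L$ the contraction with $\omega$ furnishes the standard vector bundle isomorphism $\nu(L)\xrightarrow{\sim} T^*L$, $[v]\mapsto (\iota_v\omega)|_{TL}$: it is well defined and injective precisely because $TL$ equals its own $\omega$-orthogonal, and it is an isomorphism by equality of ranks. Composing the identifications gives $E\cong \nu(L)\cong T^*L\cong T^*B$, and hence the corollary. The one point that will require genuine care is the identification $E\cong\nu(L)$: one must check that the vector bundle structure produced in the proof of Proposition \ref{pro:pro} (through fibrewise linearization at the origin) is indeed the one for which $d\Psi$ realizes, along the zero section, the canonical isomorphism between the vertical tangent bundle and $E$. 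Everything else is either the transversality of $\sigma$ to the fibres or the routine linear algebra of Lagrangian subspaces.
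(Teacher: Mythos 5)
Your proposal is correct and follows essentially the same route as the paper: reduce to a vector bundle via Proposition \ref{pro:pro}, identify it with the normal bundle of the section, and use the standard Lagrangian isomorphism $\nu(L)\cong T^*L$ via contraction with $\omega$. The only remark is that your final worry is unnecessary: the identification $E\cong\nu(L)$ needs nothing from the particular linearization used in the proof of Proposition \ref{pro:pro}, only the canonical isomorphism between a vector bundle and its vertical tangent bundle along the zero section, which holds for any vector bundle structure whatsoever.
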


\begin{proof} By Proposition \ref{pro:pro} we can assume without loss of generality that $p\colon Q\rightarrow B$ is a vector bundle.
 A vector bundle is isomorphic to the normal bundle of the graph of either of its sections. If the graph is Lagrangian, basic
 symplectic linear algebra \cite{MS} implies the
  normal bundle of the graph  is isomorphic to $T^*B$, and this proves the corollary.
\end{proof}

\begin{proof}[Proof of Theorem \ref{thm:main}]
By \cite{Ma09}, Corollary 3,   ${\bf s}\colon \cG(\pi)\rightarrow \Sigma$ is a locally trivial fibration with fiber diffeomorphic
to $\mathbb{R}^2$. Because it is a symplectic groupoid, the units are a Lagrangian section \cite{We}, and therefore the Theorem follows from 
Corollary \ref{cor:cor}.

\end{proof}

The proof of  Theorem \ref{thm:main} is purely topological. It would be desirable
to have a proof in the spirit of Lie theory, namely, to find a connection on the algebroid  $(T^*M,[\cdot,\cdot]_\pi)$ whose (contravariant) exponential
map $\mathrm{exp}\colon T^*M\rightarrow \cG(\pi)$ be a diffeomorphism, and hence deduce that $\cG(\pi)$ is of `exponential type'.
Such proof might also shed some light on whether  $(\cG(\pi),\omega_\cG)$ is the standard or an exotic symplectic structure
on the cotangent bundle. The reason is that it is temping to try to prove that canonical integrations
are standard cotangent bundles, by using a global version of the symplectic realization construction in \cite{CM11};  
the kind of problems one encounters for the latter approach are analogous to
 those appearing when trying to show that an exponential provides a diffeomorphism from the algebroid to the canonical integration.


\begin{thebibliography}{11}


 \bibitem{CaFe}  Cattaneo A.;  Felder G. \emph{Poisson sigma models
 and symplectic groupoids.}  Quantization of singular symplectic quotients,
  61--93, Progr. Math., 198, Birkhauser, Basel, 2001.



\bibitem{BCST12}  Bonechi, F.; Ciccoli, N.;  Staffolani. N.,  Tarlini, M.
\emph{The quantization of the symplectic groupoid of the standard Podleś sphere.} J. Geom. Phys. 62 (2012), no. 8, 1851--1865.



\bibitem{CF02} M. Crainic, M.; Fernandes, R.L.  \emph{Integrability of Poisson brackets.} J. Differential Geom. 66 (2004), no. 1, 71--137.

\bibitem{CM11}  Crainic, M.;   Marcut, I. \emph{On the existence of symplectic realizations.} J. Symplectic Geom. 9 (2011), no. 4, 435--444. 

\bibitem{Ha08}  Hawkins, E. \emph{A groupoid approach to quantization.}
J. Symplectic Geom. 6 (2008), no. 1, 61--125. 

   \bibitem{Ma09}  Mart\'inez Torres, D.  \emph{A note on the separability of canonical integrations of Lie algebroids.} 
   Math. Res. Lett. 17 (2010), no. 1, 69--75. 

   \bibitem{MD}  McDuff, D. \emph{The symplectic structure of K\"ahler manifolds of nonpositive curvature.}
   J. Differential Geom. 28 (1988), no. 3, 467--475.
   
   
\bibitem{MS}  McDuff, D.; Salamon, D. Introduction to symplectic topology. Second edition. 
Oxford Mathematical Monographs. The Clarendon Press, Oxford University Press, New York, 1998.

   
    \bibitem{Me03} Meigniez, G. \emph{Submersions, fibrations and bundles.}  Trans. Amer. Math. Soc.  {\bf 354}  (2002),  no. 9, 3771--3787.

\bibitem{We} Weinstein, A.
Symplectic groupoids and Poisson manifolds.
Bull. Amer. Math. Soc. (N.S.) 16 (1987), no. 1, 101–104. 

%

\end{thebibliography}
\end{document}